\newtheorem{thm}{Theorem}
\newtheorem{cor}[thm]{Corollary}
\newtheorem{prop}[thm]{Proposition}
\newtheorem{lemma}[thm]{Lemma}
\newtheorem{q}[thm]{Question}
\newtheorem{Rmk}[thm]{Remark}
\newtheorem*{rmk}{Remark}
\newtheorem*{rmks}{Remarks}
\newenvironment{enui}{\begin{enumerate}[(i)]}{\end{enumerate}}
\newcommand\reff[1]{(\ref{#1})}
\newcommand\Reff[1]{\textbf{(\ref{#1})}}
\newcommand\wt[1]{{\widetilde{#1}}}
\newcommand{\N}{\mathbb{N}}
\newcommand{\Z}{\mathbb{Z}}
\newcommand{\R}{\mathbb{R}}
\newcommand\Om{\Omega}
\newcommand\om{\omega}
\newcommand\x{\times}
\newcommand\Si{\Sigma}
\renewcommand\phi{\varphi}
\newcommand\ka{\kappa}
\newcommand\lam{\lambda}
\newcommand{\BAR}[1]{{\overline{#1}}}
\newcommand\wo{\setminus}
\newcommand\id{{\operatorname{id}}}
\newcommand\nn{{\nonumber}}
\newcommand\sub{\subseteq}
\newcommand\C{\mathbb C}
\newcommand\dd{\partial}
\title[A symplectic embedding of the cube with minimal sections]{A symplectic embedding of the cube with minimal sections and a question by Schlenk}
\author{Fabian Ziltener}
\begin{document}

\maketitle

\begin{abstract} I prove that the open unit cube can be symplectically embedded into a longer polydisc in such a way that the area of each section satisfies a sharp bound and the complement of each section is path-connected. This answers a variant of a question by F.~Schlenk.
\end{abstract}

\section{The main result}
Let $n\geq2$. By $q^1,p_1,\ldots,q^n,p_n$ we denote the standard coordinates in $\R^{2n}$, and we equip $\R^{2n}$ with the standard symplectic form $\om_0:=\sum_{i=1}^ndq^i\wedge dp_i$.\footnote{Following the physicists' convention I use an upper index for the $i$-th coordinate of a point $q$ in the base manifold $\R^n$ and lower index for the $i$-th coordinate of a covector $p\in\R^n=T_q^*\R^n$.} 
We denote by $B^m_r$ resp.~$\BAR B^m_r$ the open resp.~closed 
ball in $\R^m$ of radius $r$ around 0. M.~Gromov's famous nonsqueezing theorem \cite[Corollary, p.~310]{Gro} implies 
that $\BAR B^{2n}_r$ does not symplectically embed into the closed unit symplectic cylinder $\BAR B^2_1\x\R^{2n-2}$ if $r>1$. In \cite{SchlenkQ} F.~Schlenk investigated how flexible symplectic embeddings are in the case $r\leq1$. More precisely, for every $z\in\R^{2n-2}$ we define
\[\iota_z:\R^2\to\R^{2n},\quad\iota_z(y):=(y,z).\]
Answering a question by D.~McDuff \cite{McDuff}, in \cite[Theorem 1.1]{SchlenkQ} Schlenk proved that for every $a>0$ there exists a symplectic embedding $\phi$ of $\BAR B^{2n}_1$ into $\BAR B^2_1\x\R^{2n-2}$, such that for every $z\in\R^{2n-2}$ the section $\iota_z^{-1}\big(\phi\big(\BAR B^{2n}_1\big)\big)$ has area\footnote{This means two-dimensional Lebesgue measure.} at most $a$. 

Schlenk's lifting method \cite[Section 8.4]{SchlenkE} also shows that for every positive integer $k$ and every $a>\frac1k$ there exists a symplectic embedding of the open cube $(0,1)^{2n}$ into the open polydisc $(0,1)^{2n-1}\x(0,k)$, whose sections have area at most $a$. 
The main result of the present article answers the following two questions:
\begin{q}\label{q:c} Is this statement true with the integer $k$ replaced by a general real number $c\geq1$?
\end{q}
\begin{q}\label{q:a sharp} Can the bound $a$ on the areas of the sections be made sharp, i.e., equal to $\frac1c$?\footnote{There is always a section of area at least $\frac1c$, by Fubini's theorem. Hence $a=\frac1c$ is the minimal possible bound.}
\end{q}
I also answer a variant of the following question by Schlenk. For every bounded subset $S$ of $\R^m$ we define the \emph{bounded hull of $S$} to be the union of $S$ and all bounded connected components of $\R^m\wo S$.
\begin{q}[Schlenk, \cite{SchlenkQ}, Question 2.2]\label{q:Schlenk} Let $n\geq2$, $\phi$ be a symplectic embedding of $B^{2n}_1$ into $B^2_1\x\R^{2n-2}$, and $a<\pi$. Does there exist $z\in\R^{2n-2}$ such that the bounded hull of the closure of the section $\iota_z^{-1}\big(\phi\big(B^{2n}_1\big)\big)$ has area 
at least $a$?
\end{q}
The main result of this article is the following.
\begin{thm}\label{thm:sec} For every $n\geq2$ and $c\in[1,\infty)$ there exists a symplectic embedding $\phi:(0,1)^{2n}\to(0,1)^{2n-1}\x(0,c)$, such that for every $z\in\R^{2n-2}$ the following holds:
\begin{enui}
\item\label{thm:sec:area} The section $\iota_z^{-1}\big(\phi\big((0,1)^{2n}\big)\big)$ has area at most $\frac1c$. 
\item\label{thm:sec:compl} Its complement in $\R^2$ is path-connected.
\end{enui}
\end{thm}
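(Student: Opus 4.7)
I would reduce to $n=2$ by taking $\phi$ to act as the identity on the middle symplectic pairs $(q^i,p_i)$, $2\le i\le n-1$, and non-trivially only on the block $(q^1,p_1,q^n,p_n)$. Under this reduction, the section $\iota_z^{-1}(\phi((0,1)^{2n}))$ is empty unless every middle pair of~$z$ lies in~$(0,1)^2$, and otherwise it equals the $4$-dimensional section $\iota_{(Q^n,P_n)}^{-1}(\phi_0((0,1)^4))$ of a symplectic embedding $\phi_0:(0,1)^4\to(0,1)^3\times(0,c)$. It therefore suffices to produce such a~$\phi_0$ with the two conclusions.

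For the $4$-dimensional construction, the guiding observation is that, since $\phi_0$ preserves Lebesgue volume, Fubini's theorem forces
\[
\int_{(0,1)\times(0,c)}\mathrm{area}\bigl(\iota_{(Q^n,P_n)}^{-1}\phi_0((0,1)^4)\bigr)\,dQ^n\,dP_n=1,
\]
so the average $(Q^1,P_1)$-section area equals $\tfrac{1}{c}$ and any pointwise bound $\leq\tfrac{1}{c}$ is automatically sharp for almost every $(Q^n,P_n)\in(0,1)\times(0,c)$. I build $\phi_0$ in two stages. Stage~1: on the complement of finitely many ``slab walls'' inside $(0,1)^4$ I define a piecewise-affine symplectic map~$\phi_0^{\mathrm{pre}}$ by partitioning the $p_1$-direction into $\lceil c\rceil$ consecutive slabs (the first $\lfloor c\rfloor$ of width~$\tfrac{1}{c}$ and, if $c\notin\Z$, a last slab of width $\tfrac{c-\lfloor c\rfloor}{c}$) and, on each slab, applying a symplectic translation (plus a compensating rescaling in $(q^n,p_n)$ on the fractional slab) that sends it into a distinct $P_n$-band of width~$1$ (resp.~$c-\lfloor c\rfloor$) inside $(0,1)^3\times(0,c)$, with $(Q^1,P_1)$-section of area at most~$\tfrac{1}{c}$. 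Stage~2: I glue the pieces of~$\phi_0^{\mathrm{pre}}$ into a true symplectic embedding $\phi_0$ of the open cube~$(0,1)^4$ via a Hamiltonian isotopy supported in thin collars around each slab wall; this isotopy is designed to act only in the $(Q^n,P_n)$-pair, so that the $(Q^1,P_1)$-section is unchanged and the sharp bound survives the smoothing.

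Path-connectedness of the complement of each section is automatic, since every non-empty section of the final image is a single open rectangle in~$(0,1)^2$ of area at most~$\tfrac{1}{c}$, and any such rectangle has a path-connected complement in~$\R^2$ (one can join two arbitrary complement points through the unbounded exterior of the rectangle). The hard part is the smooth gluing in Stage~2: a naive smoothing across a slab wall inevitably touches the $(Q^1,P_1)$-coordinates and leaks a small positive excess into the sections, which is exactly why Schlenk's lifting method \cite[Section~8.4]{SchlenkE} achieves only the non-sharp bound $a>\tfrac{1}{k}$ and is restricted to integer~$k$. The key technical insight is to confine the smoothing entirely to the $(Q^n,P_n)$-pair, where the earlier translations leave a bit of room to manoeuvre without spillover into the $(Q^1,P_1)$-section; combined with the fractional-slab refinement, this delivers both the sharp bound~$\tfrac{1}{c}$ and a real-valued parameter $c\in[1,\infty)$.
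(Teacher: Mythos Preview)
Your Stage~2 gluing is the gap, and it cannot be repaired as stated. Near a slab wall $p_1=p_1^*$ you need a symplectic map that interpolates between two different translations of the $(q^n,p_n)$-plane; in particular the $(Q^n,P_n)$-output must depend nontrivially on $p_1$. But a smooth symplectic map with $(Q^1,P_1)=(q^1,p_1)$ and $(Q^n,P_n)$ depending on $(p_1,q^n,p_n)$ cannot exist unless that dependence on $p_1$ is trivial: writing out $\phi^*\omega_0=\omega_0$, the vanishing of the $dp_1\wedge dq^n$ and $dp_1\wedge dp_n$ coefficients forces $\partial_{p_1}(Q^n,P_n)=0$ (since the $(q^n,p_n)$-Jacobian is nondegenerate). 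Equivalently, any Hamiltonian generating a $p_1$-dependent motion in the $(q^n,p_n)$-plane, say $H=f(p_1)g(q^n,p_n)$, has $\dot q^1=\partial_{p_1}H=f'(p_1)g(q^n,p_n)\neq 0$, so the flow necessarily moves $q^1$. Thus the ``key technical insight'' of confining the smoothing to the $(Q^n,P_n)$-pair is exactly what the symplectic condition forbids; the leak into the $(Q^1,P_1)$-section that you identify in Schlenk's lifting is not an artifact of his method but a genuine obstruction to your piecewise scheme, and with it both the sharp bound and the claim that sections are rectangles collapse.

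The paper sidesteps this entirely by avoiding any gluing: it uses a single \emph{linear} symplectomorphism $\Psi(q^1,p_1,q^2,p_2)=(q^1-cq^2,\,p_1,\,q^2,\,cp_1+p_2)$, then passes to the quotient $(\R/\Z)\times\R\times\R\times(\R/c\Z)$ (the ``wrapping''), and finally composes with area-preserving embeddings of the resulting cylinders into rectangles. Injectivity of $\pi\circ\Psi$ on $(0,1)^4$ is checked directly, and every section is the $\lambda$-image of a product $V_{Q^2}\times W_{\bar P_2}$ where $V_{Q^2}$ is the circle minus one point and $W_{\bar P_2}$ has length exactly $\tfrac1c$; the missing point in $V_{Q^2}$ is what gives the path-connected complement. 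No collars, no interpolation, no loss.
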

This theorem answers Questions \ref{q:c} and \ref{q:a sharp} affirmatively. It also provides a negative answer to Schlenk's Question \ref{q:Schlenk} with the word ``closure'' dropped. It even implies that there exists a symplectic embedding for which the bounded hull of each section has arbitrarily small area: 
\begin{cor}\label{cor:bounded hull} For every $n\geq2$ and $a>0$ there exists a symplectic embedding $\psi:B^{2n}_1\to B^2_1\x\R^{2n-2}$, such that the bounded hull of each section of $\psi\big(B^{2n}_1\big)$ has area at most $a$.
\end{cor}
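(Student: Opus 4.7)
Given $a>0$, I may assume $0<a<\pi$, since for $a\geq\pi$ the trivial inclusion $B^{2n}_1\subset B^2_1\x\R^{2n-2}$ already suffices. Set $c:=\pi/a\geq1$ and let $\phi\colon(0,1)^{2n}\to(0,1)^{2n-1}\x(0,c)$ be the embedding provided by Theorem~\ref{thm:sec}. I plan to construct $\psi$ as a composition of four symplectic maps: (i) the trivial inclusion $B^{2n}_1\hookrightarrow(B^2_1)^n$, valid because $\sum_i\big((q^i)^2+p_i^2\big)<1$ forces $(q^i)^2+p_i^2<1$ for every $i$; (ii) a product symplectomorphism $M\colon(B^2_1)^n\to(0,\sqrt\pi)^{2n}$, built from $n$ area-preserving diffeomorphisms $B^2_1\to(0,\sqrt\pi)^2$ applied in each $(q^i,p_i)$-factor (both factors have area $\pi$, so Moser's theorem applies); (iii) the rescaled map $\tilde\phi:=\Sigma_{\sqrt\pi}\circ\phi\circ\Sigma_{1/\sqrt\pi}\colon(0,\sqrt\pi)^{2n}\to(0,\sqrt\pi)^{2n-1}\x(0,\sqrt\pi c)$, where $\Sigma_\lam(x):=\lam x$, which is symplectic and whose $(q^1,p_1)$-sections have area at most $\pi/c=a$ with path-connected complement in $\R^2$; and (iv) the postcomposition $G(y,z):=(f(y),z)$ with $f\colon(0,\sqrt\pi)^2\to B^2_1$ area-preserving and chosen to extend to a homeomorphism $[0,\sqrt\pi]^2\to\BAR{B}^2_1$ sending $\partial[0,\sqrt\pi]^2$ onto $\partial\BAR{B}^2_1$. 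Then $\psi:=(G\circ\tilde\phi\circ M)\big|_{B^{2n}_1}$ is a symplectic embedding into $B^2_1\x\R^{2n-2}$.

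To verify the bound, fix $z\in\R^{2n-2}$ and let $T_z$ and $S_z$ denote the $(q^1,p_1)$-sections of $\tilde\phi\big((0,\sqrt\pi)^{2n}\big)$ and $\tilde\phi\big(M(B^{2n}_1)\big)$, respectively. Then $S_z\subset T_z\subset(0,\sqrt\pi)^2$, the set $\R^2\setminus T_z$ is path-connected, and the area of $T_z$ is at most $a$. Every bounded component $C$ of $\R^2\setminus S_z$ is contained in $T_z$: the connected set $\R^2\setminus T_z\subset\R^2\setminus S_z$ is unbounded, so it lies in the unbounded component of $\R^2\setminus S_z$, disjoint from $C$. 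Hence the bounded hull of $S_z$ is contained in $T_z$ and has area at most $a$. The boundary-preserving extension of $f$ then transfers this topological picture to $f(S_z)=\iota_z^{-1}(\psi(B^{2n}_1))$: its bounded hull is contained in $f(T_z)$, which, since $f$ is area-preserving, has the same area as $T_z$ and hence is bounded by $a$.

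The main obstacle is step~(iv), namely producing $f$ together with its required boundary extension. One way is to start from any smooth diffeomorphism between the open square and the open disc that extends continuously to the closures boundary-to-boundary (for instance one built from a radial map), and then correct it by Moser's trick applied to a family of area forms that agree near the boundary, thereby preserving the boundary extension while achieving area-preservation. Granted this ingredient, the topological argument in the second paragraph goes through cleanly, and the construction yields the desired embedding.
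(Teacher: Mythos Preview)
Your proof is correct and follows essentially the same route as the paper's. The paper composes $\phi$ on both sides with the map $\ka$ of Lemma~\ref{le:square}\reff{le:square:chi 1} (an area-preserving diffeomorphism between the open disc and the open square that extends to a homeomorphism of the closures), which is exactly your ingredient~(iv); your steps (i)--(ii) amount to the paper's implicit inclusion $B^{2n}_r\subset(\BAR B^2_r)^n$ followed by $\ka\times\cdots\times\ka$, and your rescaling in~(iii) replaces the paper's initial passage from radius $1$ to radius $r=\pi^{-1/2}$. Your containment ``bounded hull of $S_z\subset T_z$'' is the paper's combination of monotonicity (Remark~\ref{rmk:mon}) with the fact that $T_z$ equals its own bounded hull, and your boundary-preserving extension of $f$ plays the same role as $\ka$ in the paper's one-line transfer of path-connectedness from $\R^2\setminus\iota_z^{-1}\big(\phi((0,1)^{2n})\big)$ to $\R^2\setminus V$.
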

(For a proof see p.~\pageref{proof:cor:bounded hull}.) This corollary is optimal in the sense that its statement becomes false if we replace $B^{2n}_1$ and $B^2_1$ by the \emph{closed} balls $\BAR B^{2n}_1$ and $\BAR B^2_1$. Even the following is true:
\begin{prop}[F.~Lalonde, D.~McDuff]\label{prop:closed} Let $n\in\N$ and $\phi:\BAR B^{2n}_1\to\BAR B^2_1\x\R^{2n-2}$ be a symplectic embedding.\footnote{We don't impose any restrictions on how $\phi$ maps the boundary of the ball.} Then there exists $z\in\R^{2n-2}$, such that the section $\iota_z^{-1}\big(\phi\big(\BAR B^{2n}_1\big)\big)$ contains the circle of radius $1$ around 0.
\end{prop}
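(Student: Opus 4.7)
The plan is to follow the classical $J$-holomorphic curves argument of Lalonde and McDuff, exploiting the fact that the hypothesis $r=1$ is the equality case in Gromov's non-squeezing theorem. For $\eps>0$ small and $R>0$ large, I first compactify the target by symplectically embedding $\BAR B^2_1\x\R^{2n-2}$ into $(M,\Om):=(S^2\x T^{2n-2},\sigma_{\pi+\eps}\oplus\sigma_R)$, where $S^2$ has total area $\pi+\eps$ (so that $\BAR B^2_1$ sits inside as a disc leaving a ``cap'' of area $\eps$) and $T^{2n-2}$ is a flat torus whose sidelength $R$ is so large that $\mathrm{pr}_2(\phi(\BAR B^{2n}_1))$ fits comfortably inside. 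I then choose an $\Om$-tame almost complex structure $J$ on $M$ with $J=\phi_*J_0$ on $\phi(B^{2n}_1)$ (where $J_0$ is the standard complex structure on $\C^n$) and with $J$ equal to the split structure $j_{S^2}\oplus j_{T^{2n-2}}$ outside a compact neighborhood of $\phi(\BAR B^{2n}_1)$.

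Gromov's existence theorem in the class $A:=[S^2\x\{\mathrm{pt}\}]$ then yields, through every point of $M$, a $J$-holomorphic sphere of $\Om$-area $\pi+\eps$. Taking the sphere $u_\eps$ through $\phi(0)$, the set $V_\eps:=\phi^{-1}(u_\eps(S^2))\cap\BAR B^{2n}_1$ is a $J_0$-holomorphic analytic subset of $\BAR B^{2n}_1$ containing the origin. Lelong's monotonicity inequality gives $\mathrm{area}(V_\eps)\geq\pi$, and since the total area of $u_\eps$ is $\pi+\eps$, the complementary piece $u_\eps(S^2)\wo\phi(B^{2n}_1)$ has area at most $\eps$. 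Letting $\eps\to0$ along a suitable sequence and applying Gromov compactness, the equality case of Lelong's inequality forces the interior limit component to be a complex disc $L\cap\BAR B^{2n}_1$ of area exactly $\pi$ for some affine complex line $L$ through $0$, while the exterior component, having vanishing $\Om$-area and living in the region where $J$ is split, collapses onto a single horizontal cap $(S^2\wo B^2_1)\x\{z_0\}$ for some $z_0\in\R^{2n-2}$. Compatibility along the attaching circle then forces $\phi(\partial(L\cap\BAR B^{2n}_1))=\partial B^2_1\x\{z_0\}$, so $\partial B^2_1\x\{z_0\}\sub\phi(\BAR B^{2n}_1)$ and the section at $z_0$ contains the unit circle.

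The main difficulty lies in the Gromov compactness and bubbling analysis in the limit $\eps\to 0$: one must identify the precise structure of the limit nodal curve, rule out more complicated bubble trees, and verify that the attaching circle lies in a single fiber $\{z_0\}$ of $\R^{2n-2}$ rather than drifting with the parameter on $\partial B^2_1$. The key algebraic inputs are the self-intersection $0$ and minimal $\Om$-area of the class $A$, the absence of non-constant $J_0$-holomorphic spheres in $\BAR B^{2n}_1\sub\C^n$, and positivity of intersections for $J$-holomorphic curves after slicing by a generic fiber of the torus projection.
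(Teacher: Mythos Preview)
The paper does not give an argument; its entire proof is the sentence ``This follows from \cite[Lemma 1.2]{LM}.'' Your sketch is a reconstruction of the $J$-holomorphic curve argument that underlies that lemma, so you are supplying the content of what the paper merely cites rather than proposing a genuinely different route.

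Two small caveats on the sketch itself. First, the exterior piece of $u_\eps$ does not lie entirely in the region where $J$ is split: there is necessarily an interpolation zone near $\phi(S^{2n-1})$, so the collapse of the exterior onto a single fiber $\{z_0\}$ cannot be read off from the split structure alone and requires an additional monotonicity or diameter estimate in that zone. Second, since the symplectic form on the $S^2$ factor depends on $\eps$, the Gromov compactness step should be set up on a fixed smooth manifold with a converging family of tame pairs $(\Om_\eps,J_\eps)$; this is routine but worth making explicit. You already flag both of these as the delicate points and list the right ingredients (minimality of the class $A$, absence of bubbles inside $\C^n$, positivity of intersections), so with those details filled in the argument is exactly the standard Lalonde--McDuff one.
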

In particular the bounded hull of this section equals $\BAR B^2_1$, which has area $\pi$.
\begin{proof}[Proof of Proposition \ref{prop:closed}] This follows from \cite[Lemma 1.2]{LM}.
\end{proof}
\begin{rmk}Let $\phi$ be as in the statement of Theorem \ref{thm:sec}. Then each section of the image of $\phi$ equals its own bounded hull. Hence $\phi$ is a \emph{sharp} counterexample to a variant of Question \ref{q:Schlenk} concerning embeddings of cubes. 
\end{rmk}
In the case $n=2$ the idea of proof of Theorem \ref{thm:sec} is to consider the linear symplectic map $\Psi:(q,p)\mapsto(Q,P)$, induced by the Lagrangian shear $p\mapsto P:=\big(p_1,cp_1+p_2\big)$. The $P_2$-sections of the image of the square $(0,1)^2$ under this shear have length at most $\frac1c$. Hence the area of each section of $\Psi\big((0,1)^4\big)$ is at most $\frac1c$. To make the image of $\Psi$ fit in the polydisc $(0,1)^3\x(0,c)$, we wrap its upper part (in $P_2$-direction) back to the lower part, by passing to the quotient $\R/c\Z$. We also wrap the $Q^1$-coordinate. See Figure \ref{fig:shear}. 
\begin{figure}
\centering
\leavevmode\epsfbox{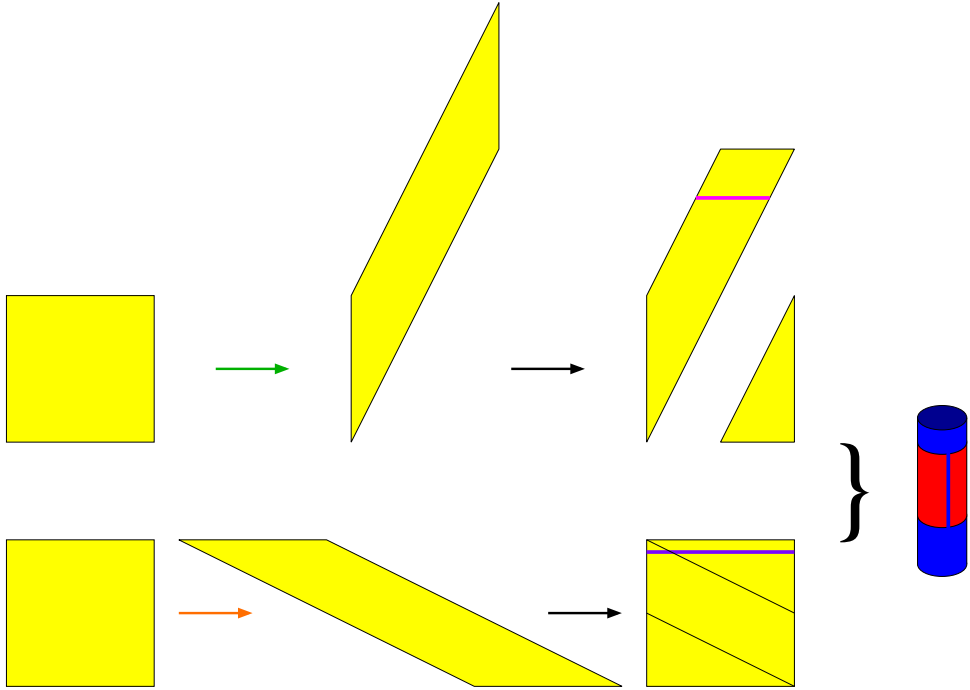}
\caption{The \textcolor{green}{green} arrow depicts the Lagrangian shear $p\mapsto P:=\big(p_1,cp_1+p_2\big)$, and the \textcolor{orange}{orange} arrow the induced shear in the $q$-plane. The black arrows depict the wrapping maps. The \textcolor{magenta}{magenta} line segment is a $\BAR P_2$-section of the image of the square under the composed map in the $p$-plane, where $\BAR P_2\in\R/(c\Z)$. The \textcolor{violet}{violet} set depicts a $\BAR Q^2$-section of the image of the open square under the composed map in the $q$-plane, where $\BAR Q^2\in\R/\Z$. The bracket $\}$ indicates that the product of these two sets is given by the \textcolor{red}{red} ribbon on the blue cylinder. The image of this ribbon under some area-preserving map is a section the image of the desired symplectic embedding $\phi$. It has area equal to $\frac1c$.}\label{fig:shear}
\end{figure}
Finally, we compose the resulting map with the product of two area preserving embeddings of finite cylinders into rectangles. This yields a symplectic embedding with the desired properties.
\begin{rmks}[method of proof, related result, terminology]\begin{itemize}
\item This construction is similar to L.~Traynor's symplectic wrapping construction, which she used e.g.~to show that certain polydiscs embed into certain cubes, see \cite{Tra} and \cite[Chapter 7]{SchlenkE}. One difference is that I wrap coordinates of mixed type ($Q$ and $P$), whereas Traynor wraps coordinates of pure type.
\item Schlenk proved a nonsharp result regarding the areas of the bounded hulls of the sections. 
More precisely, his folding method \cite[Section 8.3]{SchlenkE} can be used to prove that for every $n\geq2$, positive integer $k$, and $\ell\in(0,1)$ there exists a symplectic embedding $\phi:(0,\ell)^{2n}\to(0,1)^{2n-1}\x(0,k)$, such that the bounded hull of every section of $\phi\big((0,\ell)^{2n}\big)$ has area at most $\frac1k$. Theorem \ref{thm:sec} improves this in the following ways:
\begin{itemize}
\item It treats the critical case $\ell=1$.
\item It makes the area estimate sharp.
\item It holds for any real number $c\geq1$, not only for an integer $c=k$.
\item The proof of Theorem \ref{thm:sec} is easier than the folding method.
\end{itemize}
\item In \cite{SchlenkQ} and \cite[p.~226]{SchlenkE} Schlenk calls the bounded hull of the closure of a set its ``simply connected hull''. The simply connected hull of a simply connected compact subset $S$ of $\R^m$ need not be equal to $S$. In the case $m\geq3$ an example is given by the sphere $S:=S^{m-1}$, and in the case $m=2$ by the Warsaw circle. This set is produced by closing up the topologist's sine curve with an arc. For this reason I prefer the terminology ``bounded hull''. Since this notion is only defined for \emph{bounded} subsets of $\R^m$, no confusion should arise from the fact that the bounded hull of a bounded set $S$ can differ from $S$.
\item For more information about related work see \cite{SchlenkE}.
\end{itemize}
\end{rmks}
\section{Proofs of the main result and of Corollary \ref{cor:bounded hull}}
In the proofs of Theorem \ref{thm:sec} and Corollary \ref{cor:bounded hull} we will use the following lemma.
\begin{lemma}[squaring the disc and the cylinder]\label{le:square} We denote $r:=\pi^{-\frac12}$.
\begin{enui}
\item\label{le:square:chi 1} There exists a homeomorphism
\[\ka:\BAR B^2_r\to[0,1]^2,\]
that restricts to a (smooth) symplectomorphism between the interiors.
\item\label{le:square:chi} For every $y_0\in(0,1)^2$ there exists continuous map
\[\lam:(\R/\Z)\x[0,1]\to[0,1]^2\]
that maps $(\R/\Z)\x\{1\}$ to $y_0$, and restricts to a homeomorphism from $(\R/\Z)\x[0,1)$ to $[0,1]^2\wo\{y_0\}$ and to a symplectomorphism from $(\R/\Z)\x(0,1)$ to $(0,1)^2\wo\{y_0\}$.
\end{enui}
\end{lemma}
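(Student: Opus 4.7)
The plan for both parts is the same: construct an explicit continuous model with the desired topological behaviour at the boundary (or at the puncture), smooth and orientation-preserving on the open source, and then correct it on the interior by an area-preserving diffeomorphism supplied by Moser's trick so that it becomes a symplectomorphism.

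For part \reff{le:square:chi 1} I would first write down a ``polar-to-square'' model $f_0\colon\BAR B^2_r\to[0,1]^2$ that sends each circle of radius $\rho\in[0,r]$ to the boundary of the concentric square of half-side $\tfrac12\sqrt{\pi}\,\rho$ in $[0,1]^2$, parametrized via a fixed $C^\infty$ map $\R/\Z\to\dd[-1,1]^2$ smoothed near the corners. This $f_0$ is a homeomorphism that restricts to a diffeomorphism on $B^2_r$, and by construction $\sigma:=f_0^*\om_0$ has the same total mass as $\om_0$. Using a one-dimensional reparametrization of the radial coordinate of $f_0$, I can further arrange $\sigma=\om_0$ on a collar neighbourhood of $\dd B^2_r$. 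Moser's trick then yields a diffeomorphism $h\colon B^2_r\to B^2_r$, compactly supported in $B^2_r$, with $h^*\sigma=\om_0$. Setting $\ka:=f_0\circ h$ produces a symplectomorphism on the interior that agrees with $f_0$ near $\dd B^2_r$, hence extends to the required homeomorphism $\BAR B^2_r\to[0,1]^2$.

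For part \reff{le:square:chi} I would choose a continuous family of loops $\gamma_t\colon\R/\Z\to[0,1]^2$, $t\in[0,1]$, with $\gamma_0$ a fixed homeomorphism onto $\dd[0,1]^2$, $\gamma_1\equiv y_0$, and each $\gamma_t$ for $t\in(0,1)$ a smooth Jordan curve around $y_0$ bounding area $1-t$ in $[0,1]^2$. Setting $\lam(s,t):=\gamma_t(s)$ with an ``area-constant speed'' angular parametrization near $t=0$ and $t=1$, one obtains a map with the required topological properties, smooth on the open cylinder, and satisfying $\lam^*\om_0=ds\wedge dt$ outside a compact subset of $(\R/\Z)\x(0,1)$. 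A compactly supported Moser correction on the open cylinder then turns $\lam$ into a symplectomorphism onto $(0,1)^2\wo\{y_0\}$ without altering the behaviour at $t=0$ (where $\lam$ is a homeomorphism onto $\dd[0,1]^2$) or at $t=1$ (where $\lam$ collapses the circle to $y_0$).

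The main technical obstacle in both parts is ensuring that Moser's correction extends continuously to the boundary (or to the collapsed end), and the remedy outlined above is to arrange from the start that the initial model's pullback of $\om_0$ already equals $\om_0$ outside a compact subset of the open source; this reduces Moser's step to a compactly supported diffeomorphism that extends by the identity. In part \reff{le:square:chi 1} this reduction is a one-dimensional change of the radial coordinate, and in part \reff{le:square:chi} a similar change of the loop-labelling parameter $t$ near the two ends of the cylinder.
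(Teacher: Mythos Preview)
Your overall strategy---build an explicit model and correct it with a Moser isotopy---is the same as the paper's, but your concrete model for part \reff{le:square:chi 1} breaks at the corners. If $\eta\colon\R/\Z\to\dd[-1,1]^2$ is a $C^1$ homeomorphism onto the boundary of the square, then at each corner $\eta'$ must vanish, since the image has a tangent discontinuity while $\eta$ is $C^1$. Your radial map $f_0(\rho e^{2\pi is})=\tfrac12+\tfrac{\sqrt\pi}{2}\,\rho\,\eta(s)$ then has Jacobian proportional to $\rho\det\big(\eta(s),\eta'(s)\big)$, which vanishes along the four rays hitting the corners; so $f_0$ is \emph{not} a diffeomorphism on $B^2_r$ and $\sigma=f_0^*\om_0$ degenerates there. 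For the same reason a purely radial reparametrization cannot force $\sigma=\om_0$ on a full collar: the angular factor $\det(\eta,\eta')$ is nonconstant (indeed vanishes), so the defect is genuinely two--dimensional and your compactly supported Moser step is not available. The identical problem recurs at the $t=0$ end of your construction for part \reff{le:square:chi}, where $\gamma_0$ is again a smooth parametrization of the square's boundary.

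The paper deals with the corners explicitly. The local map $z\mapsto z^2/|z|$ sends a quadrant to a half-plane, is a homeomorphism, is smooth away from $0$, and pushes $\om_0$ to $\tfrac12\om_0$; four such charts patch to a homeomorphism $\theta\colon[0,1]^2\to\BAR B^2_r$ which is a diffeomorphism off the four corners and for which $\theta_*\om_0$ extends to a \emph{nonvanishing} smooth $2$-form on the closed disc. Then Banyaga's Moser theorem on the compact manifold with boundary $\BAR B^2_r$ (rather than a compactly supported Moser on the open disc) produces a diffeomorphism $\phi$ of $\BAR B^2_r$ fixing the boundary with $\phi_*(\theta_*\om_0)=\om_0$, and $\ka:=(\phi\circ\theta)^{-1}$ is the desired map. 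For part \reff{le:square:chi} the paper does not build a second model at all: it composes $\ka$ with the explicit symplectomorphism $(\bar q,p)\mapsto r\sqrt{1-p}\,e^{2\pi iq}$ from $(\R/\Z)\times[0,1)$ onto $\BAR B^2_r\wo\{0\}$, and post-composes with a compactly supported Hamiltonian diffeomorphism of $(0,1)^2$ moving $\ka(0)$ to $y_0$. This reuse of \reff{le:square:chi 1} sidesteps the corner issue at the $t=0$ end for free. Your plan becomes correct once you replace the concentric-squares model by one that genuinely straightens the corners---for instance exactly the $z^2/|z|$ trick above.
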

The idea of proof of this lemma is explained by Figure \ref{fig:cyl disc square}. %
\begin{figure}
\centering
\leavevmode\epsfbox{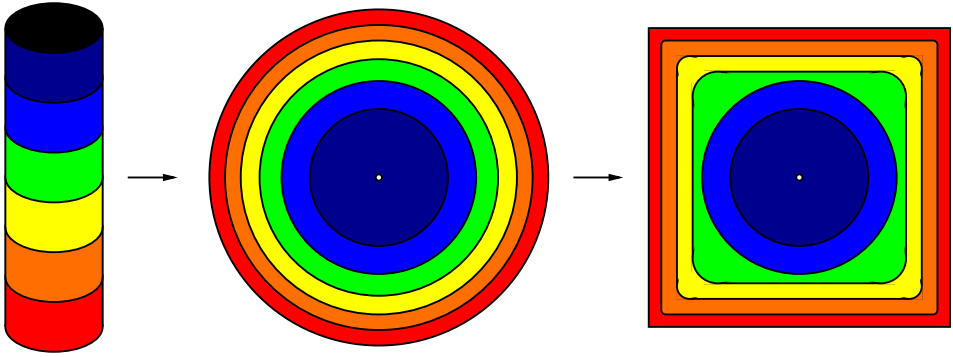}
\caption{The two arrows depict area-preserving smooth embeddings whose composition is an area-preserving embedding of the open cylinder into the open square. The idea of proof of Lemma \ref{le:square} is to choose such maps in such a way that they continuously extend to the closed cylinder and the closed disc, respectively.}\label{fig:cyl disc square}
\end{figure}
In the proof of Lemma \ref{le:square} we will use the following.
\begin{Rmk}[straightening corners]\label{rmk:square} We denote by $\Si$ the square $[0,1]^2$ without the corners. Let $r>0$ and $S$ be a subset of the circle of radius $r$ consisting of four points. There exists homeomorphism $\theta:[0,1]^2\to\BAR B^2_r$ that restricts to a diffeomorphism from $\Si$ onto $\BAR B^2_r\wo S$, such that $(\theta|\Si)_*\om_0$ extends to a nonvanishing smooth 2-form on $\BAR B^2_r$. 

To see this, observe that the map 
\[\wt\theta:[0,\infty)^2\to\R\x[0,\infty),\quad\wt\theta(z):=\frac{z^2}{|z|},\]
is a homeomorphism that restricts to a diffeomorphism from $[0,\infty)^2\wo\{0\}$ onto $\big(\R\x[0,\infty)\big)\wo\{0\}$, that satisfies
\[\big(\wt\theta\big|[0,\infty)^2\wo\{0\}\big)_*\om_0=\frac{\om_0}2.\]
The desired map $\theta$ can be constructed from four copies of $\wt\theta$ (one for each corner), using charts for $\BAR B^2_r$ and a cut off argument.
\end{Rmk}
\begin{prop}[Banyaga's Moser stability with boundary]\label{prop:Moser} Let $M$ be a compact connected oriented smooth manifold, and $\Om_0,\Om_1$ volume forms on $M$ satisfying
\[\int_M\Om_0=\int_M\Om_1.\]
Then there exists diffeomorphism $\phi$ of $M$ satisfying
\[\phi_*\Om_0=\Om_1,\quad\phi|\dd M=\id.\]
\end{prop}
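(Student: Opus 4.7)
The plan is the relative (boundary) version of the Moser trick. Interpolate by $\Omega_t:=(1-t)\Omega_0+t\Omega_1$ for $t\in[0,1]$. Both $\Omega_0$ and $\Omega_1$ induce the orientation of $M$, so every $\Omega_t$ is a volume form. I will look for a time-dependent vector field $X_t$ on $M$ vanishing near $\partial M$ whose flow $\phi_t$ satisfies $\phi_t^*\Omega_t=\Omega_0$; then $\phi:=\phi_1$ is a diffeomorphism of $M$ with $\phi_*\Omega_0=\Omega_1$ and $\phi|\partial M=\id$.

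The main analytic ingredient is an $(n-1)$-form $\alpha$ on $M$ such that $d\alpha=\Omega_1-\Omega_0$ and $\alpha\equiv 0$ on a neighborhood of $\partial M$. The hypothesis $\int_M(\Omega_1-\Omega_0)=0$, combined with the integration isomorphism $H^n(M,\partial M;\R)\cong\R$ valid for compact connected oriented manifolds with boundary, says precisely that $\Omega_1-\Omega_0$ represents the zero class in relative de Rham cohomology, so such an $\alpha$ exists. In practice one first picks any primitive $\beta$ of $\Omega_1-\Omega_0$, notes by Stokes that $\int_{\partial M}\beta|\partial M=0$, and corrects $\beta$ by subtracting an exact form supported in a collar $\partial M\times[0,\eps)$ of $\partial M$ so that the result vanishes near the boundary.

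Given $\alpha$, define $X_t$ uniquely by $\iota_{X_t}\Omega_t=-\alpha$, using that $\Omega_t$ is nondegenerate. Since $\alpha$ vanishes near $\partial M$, so does $X_t$, and hence the flow $\phi_t$ of $X_t$ exists for all $t\in[0,1]$ (by compactness of $M$) and equals the identity on a neighborhood of $\partial M$ for every $t$. The Moser computation
\[
\frac{d}{dt}\phi_t^*\Omega_t=\phi_t^*\bigl(\mathcal{L}_{X_t}\Omega_t+\dot\Omega_t\bigr)=\phi_t^*\bigl(d\iota_{X_t}\Omega_t+(\Omega_1-\Omega_0)\bigr)=\phi_t^*(-d\alpha+d\alpha)=0
\]
gives $\phi_1^*\Omega_1=\Omega_0$, equivalently $(\phi_1)_*\Omega_0=\Omega_1$, so $\phi:=\phi_1$ works.

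The one nontrivial step is the construction of the primitive $\alpha$ with the boundary-vanishing condition; the rest is the standard Moser trick. Once that cohomological normalization is in place, no further analytical difficulties arise because $X_t$ has support in the interior of the compact manifold $M$, which makes the flow automatically global and pointwise fixed near $\partial M$.
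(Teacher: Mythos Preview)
The paper does not prove this proposition; it simply cites Banyaga's original paper. Your write-up, however, has a real gap at the one step you yourself flag as ``nontrivial''. You assert the existence of an $(n-1)$-form $\alpha$ with $d\alpha=\Omega_1-\Omega_0$ and $\alpha\equiv 0$ on a \emph{neighbourhood} of $\partial M$. But if $\alpha$ vanishes identically near $\partial M$, then so does $d\alpha$, which forces $\Omega_0=\Omega_1$ near $\partial M$. That is not assumed (and in the application in this paper it is false: the pushed-forward area form $\theta_*\omega_0$ differs from $\omega_0$ near the images of the corners). Your ``in practice'' correction---subtracting an exact form $d\gamma$ supported in a collar---cannot help: $d(\beta-d\gamma)=d\beta=\Omega_1-\Omega_0$ is unchanged, so $\beta-d\gamma\equiv 0$ near $\partial M$ again forces $\Omega_1-\Omega_0\equiv0$ there. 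The relative-cohomology statement you invoke is correct but weaker than what you use: vanishing of the class in $H^n(M,\partial M;\R)$ only produces a primitive $\alpha$ with $i^*\alpha=0$ (pullback to $\partial M$ vanishes). Via $\iota_{X_t}\Omega_t=-\alpha$ this makes $X_t$ \emph{tangent} to $\partial M$, so the flow preserves $\partial M$ setwise, but it does not give $X_t|_{\partial M}=0$, hence not $\phi|\partial M=\id$.

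The standard repair (and essentially what Banyaga does) is a two-step argument. First normalise near the boundary: in a collar $\partial M\times[0,\eps)$ write $\Omega_i=\rho_i\,dt\wedge\mu$ for a fixed volume form $\mu$ on $\partial M$, and define a fibre-preserving diffeomorphism $\psi(x,t)=(x,H(x,t))$ by $\int_0^{H(x,t)}\rho_1(x,s)\,ds=\int_0^t\rho_0(x,s)\,ds$; then $\psi|\partial M=\id$ and $\psi^*\Omega_1=\Omega_0$ on a smaller collar. Patch $\psi$ with the identity to get a global diffeomorphism $\tilde\psi$ of $M$ fixing $\partial M$, with $\tilde\psi^*\Omega_1=\Omega_0$ near $\partial M$ and still $\int_M\tilde\psi^*\Omega_1=\int_M\Omega_0$. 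Now your Moser argument applies verbatim to the pair $\Omega_0$, $\tilde\psi^*\Omega_1$, since their difference is compactly supported in the interior and has integral zero; composing the resulting $\phi_1$ with $\tilde\psi$ gives the desired diffeomorphism.
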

\begin{proof} See \cite[Th\'eor\`eme, p.~127]{Ban}.
\end{proof}
\begin{proof}[Proof of Lemma \ref{le:square}] To prove \Reff{le:square:chi 1}, we define $r:=\pi^{-\frac12}$ and choose a map $\theta$ as in Remark \ref{rmk:square}. We define
\[M:=\BAR B^2_r,\quad\Om_0:=\theta_*\om_0,\quad\Om_1:=\om_0.\]
We have
\[\int_M\Om_0=\int_\Si\om_0=1=\int_M\Om_1.\]
Hence the hypotheses of Proposition \ref{prop:Moser} are satisfied. We choose a diffeomorphism $\phi$ as in the statement of this proposition. The map
\[\ka:=(\phi\circ\theta)^{-1}:\BAR B^2_r\to[0,1]^2\]
has the required properties.

We prove \Reff{le:square:chi}. There exists a symplectomorphism
\[\chi:(\R/\Z)\x[0,1)\to\BAR B^2_r\wo\{0\}.\]
For example, consider $y:\R/\Z\to\C=\R^2$, $y(\bar q):=e^{2\pi iq}$, where $q\in\bar q$ is an arbitrary representative, and define
\[\chi(\BAR q,p):=r\sqrt{1-p}y(\BAR q).\]

We choose a symplectomorphism $\xi$ of $[0,1]^2$ that equals the identity in a neighbourhood of the boundary and maps $\ka(0)$ to $y_0$.\footnote{$\xi$ is a smooth map in the sense of manifolds with boundary and corners.} We obtain such a map as the Hamiltonian flow of a suitable function on $(0,1)^2$ with compact support. The map
\[\lam:=\left\{\begin{array}{ll}
\xi\circ\ka\circ\chi&\textrm{ on }(\R/\Z)\x[0,1),\\
y_0&\textrm{ on }(\R/\Z)\x\{1\}
\end{array}\right.\]
has the required properties. This proves \reff{le:square:chi} and completes the proof of Lemma \ref{le:square}.
\end{proof}
\begin{proof}[Proof of Theorem \ref{thm:sec}] Consider the case $n=2$. We denote by
\[\pi:\R^4\to(\R/\Z)\x\R\x\R\x(\R/c\Z)\]
the canonical projection, and equip $(\R/\Z)\x\R\x\R\x(\R/c\Z)$ with the symplectic form induced by $\om_0$ and $\pi$. We denote $y_0:=z_0:=\left(\frac12,\frac12\right)$. We choose a map $\lam$ as in Lemma \ref{le:square}\reff{le:square:chi}. It follows from the same lemma that there exists a symplectomorphism
\[\lam':(0,1)\x(\R/c\Z)\to\big((0,1)\x(0,c)\big)\wo\{z_0\}.\]
We define
\begin{eqnarray*}&\Psi:\R^4\to\R^4,\quad\Psi\big(q^1,p_1,q^2,p_2\big):=\big(q^1-cq^2,p_1,q^2,cp_1+p_2\big),&\\
&\phi:=(\lam\times\lam')\circ\pi\circ\Psi\big|(0,1)^4.&
\end{eqnarray*}
The map $\phi$ is well-defined, since $\pi\circ\Psi$ maps $(0,1)^4$ to the product of the domains of $\lam$ and $\lam'$. The map $\phi$ is a symplectic immersion, as it is the composition of three symplectic immersions. A straight-forward argument shows that $\pi\circ\Psi\big|(0,1)^4$ is injective. Since $\lam|(\R/\Z)\x(0,1)$ and $\lam'$ are injective, it follows that the same holds for $\phi$. Hence $\phi$ is a symplectic embedding of $(0,1)^4$ into $(0,1)^3\x(0,c)$.

Let $(Q^2,\BAR P_2)\in(0,1)\x(\R/c\Z)$. We have
\begin{align}\nn U_{Q^2,\BAR P_2}:=&\big\{(\BAR Q^1,P_1)\in(\R/\Z)\x(0,1)\,\big|\,\big(\BAR Q^1,P_1,Q^2,\BAR P_2\big)\in\pi\circ\Psi\big((0,1)^4\big)\big\}\\
\label{eq:V W}=&V_{Q^2}\x W_{\BAR P_2},
\end{align}
\begin{equation}\label{eq:V}V_{Q^2}:=\big\{q^1-cQ^2+\Z\,\big|\,q^1\in(0,1)\big\}=(\R/\Z)\wo\{-cQ^2+\Z\},\end{equation}
\begin{align}W_{\BAR P_2}&:=\big\{P_1\in(0,1)\,\big|\,\exists p_2\in(0,1):\,cP_1+p_2+c\Z=\BAR P_2\big\}\\
\nn&=(0,1)\cap\bigcup_{p_2\in(0,1)}\frac{\BAR P_2-p_2}c,
\end{align}
where $\frac{\BAR P_2-p_2}c\in\R/\Z$. The set $W_{\BAR P_2}$ is an open subinterval of $(0,1)$ or the union of two such subintervals. It has length $\frac1c$. Using \reff{eq:V W} and \reff{eq:V}, it follows that $U_{Q^2,\BAR P_2}$ has area equal to $\frac1c$. Figure \ref{fig:U} depicts the set $U_{Q^2,\BAR P_2}$.
\begin{figure}
\centering
\leavevmode\epsfbox{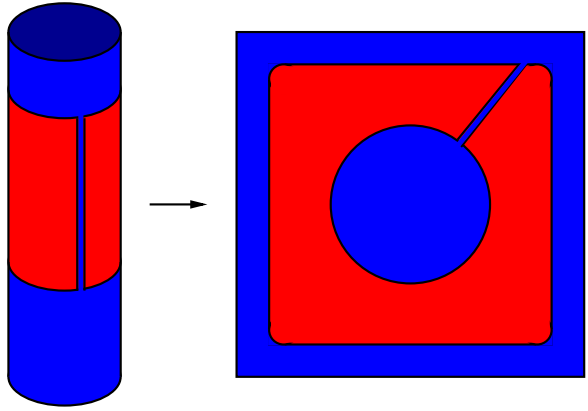}
\caption{The arrow depicts the area preserving map $\lam:(\R/\Z)\x(0,1)\to(0,1)^2$. (Compare to Figure \ref{fig:cyl disc square}.) It sends the upper part of the cylinder close to the center of the disc. The red ribbon on the cylinder is $U_{Q^2,\BAR P_2}$, the section of the image of $\phi$. The point $\BAR P_2$ determines the height of the upper boundary of the red ribbon, and therefore the radius of the circle inside the square. The point $Q^2$ determines the position of the blue slit. Because of this slit, the blue set on the right is path-connected. This is the complement of the image of the section under the map $\lam$.}
\label{fig:U} 
\end{figure} 

Let now $z\in\big((0,1)\x(0,c)\big)\wo\{z_0\}$. We denote $(Q^2,\BAR P_2):={\lam'}^{-1}(z)$. We have
\begin{equation}\label{eq:psi -1}\lam^{-1}\left(\iota_z^{-1}\big(\phi\big((0,1)^4\big)\big)\right)=U_{Q^2,\BAR P_2}.\end{equation}
Since $\lam$ is area-preserving, it follows that the section $\iota_z^{-1}\big(\phi\big((0,1)^4\big)\big)$ has area equal to $\frac1c$. For $z=z_0$ or $z$ outside of $(0,1)\x(0,c)$, the section is empty. This proves \reff{thm:sec:area}.

To prove property \reff{thm:sec:compl}, consider the continuous path 
\[y:[0,1]\to[0,1]^2,\quad y(t):=\lam\big(-cQ^2+\Z,t\big).\]
The point $y(0)$ lies on the boundary of the square $[0,1]^2$. It follows from \eqref{eq:V} that the path $y$ lies inside the complement of $\iota_z^{-1}\big(\phi\big((0,1)^4\big)\big)$ in $\R^2$. Every point outside $(0,1)^2$ can be connected to $y(0)$ through a continuous path outside of $(0,1)^2$. Every point in the complement of $\iota_z^{-1}\big(\phi\big((0,1)^4\big)\big)$ in $(0,1)^2$ can be connected to a point on the path $y$ through a path in this complement. This follows from \eqref{eq:psi -1} and the facts $U_{Q^2,\BAR P_2}=V_{Q^2}\x W_{\BAR P_2}$, $V_{Q^2}=(\R/\Z)\wo\{-cQ^2+\Z\}$. See again Figure \ref{fig:U}. This proves \reff{thm:sec:compl}.

Hence $\phi$ has the desired properties. This proves Theorem \ref{thm:sec} in the case $n=2$. For $n\geq3$ we take the product of $\phi$ with the identity map.
\end{proof}
In the proof of Corollary \ref{cor:bounded hull} we will use the following.
\begin{Rmk}[monotonicity]\label{rmk:mon} The bounded hull is monotone in the sense that if $A\sub B\sub\R^m$ are bounded sets then the bounded hull of $A$ is contained in the bounded hull of $B$.
\end{Rmk}
\begin{proof}[Proof of Corollary \ref{cor:bounded hull}]\label{proof:cor:bounded hull} We define $r:=\pi^{-\frac12}$. By a rescaling argument it suffices to show that for every $a\in(0,1]$ there exists a symplectic embedding $\psi:B^{2n}_r\to B^2_r\x\R^{2n-2}$, such that the bounded hull of each section of $\psi(B^{2n}_r)$ has area at most $a$. To prove this statement, we choose $\phi$ is as in the conclusion of Theorem \ref{thm:sec} with $c:=\frac1a$. We choose a map $\ka$ as in Lemma \ref{le:square}\reff{le:square:chi 1}. The map
\[\psi:=(\ka^{-1}\x\id)\circ\phi\circ\big(\ka\x\cdots\x\ka\big):B^{2n}_r\to B^2_r\x\R^{2n-2}\]
is a symplectic embedding. Let $z\in\R^{2n-2}$. Property \reff{thm:sec:compl} in Theorem \ref{thm:sec} implies that the complement of $V:=\ka^{-1}\Big(\iota_z^{-1}\big(\phi\big((0,1)^{2n}\big)\big)\Big)$ in $\R^2$ is path-connected. Hence $V$ equals its bounded hull. The section $\iota_z^{-1}\big(\psi(B^{2n}_r)\big)$ is contained in $V$. Using Remark \ref{rmk:mon}, it follows that the bounded hull of this section is also contained in $V$. Using Theorem \ref{thm:sec}\reff{thm:sec:area} and that $\ka$ is area-preserving, it follows that this bounded hull has area at most $\frac1c=a$. Hence $\psi$ has the desired properties. This proves Corollary \ref{cor:bounded hull}.
\end{proof}
\section{Acknowledgments}
I would like to thank Felix Schlenk for an interesting discussion and for proof-reading the first version of this article.
\bibliographystyle{amsalpha}
\bibliography{amsj,references}
\end{document}